\def\blfootnote{\xdef\@thefnmark{}\@footnotetext}
\date{\today%
    \protect\blfootnote{\copyright{\ C.~L\"oh 2023}. 
    This work was supported by the CRC~1085 \emph{Higher Invariants} 
    (Universit\"at Regensburg, funded by the DFG).
    \\
    MSC~2010 classification: 13C05, 55N31}}
\newtheorem{thm}{Theorem}[section]
\newtheorem{lemma}[thm]{Lemma}
\newtheorem{prop}[thm]{Proposition}
\theoremstyle{definition}
\newtheorem{rem}[thm]{Remark}
\newtheorem{defi}[thm]{Definition}
\newtheorem{example}[thm]{Example}
\newtheorem{algo}[thm]{Algorithm}
\newcommand{\R}{\mathbb{R}}
\newcommand{\Z}{\mathbb{Z}}
\newcommand{\N}{\mathbb{N}}
\DeclareMathOperator{\low}{low}
\DeclareMathOperator{\Span}{Span}
\DeclareMathOperator{\im}{im}
\DeclareMathOperator{\Ab}{{\sf Ab}}
\DeclareMathOperator{\Ring}{{\sf Ring}}
\DeclareMathOperator{\Mod}{{\sf Mod}}
\def\LMod#1{{}_{#1}\!\Mod}
\def\LModg#1{\LMod {#1}^*}
\def\fa#1{%
  \forall_{#1}\;\;\;}
\title{A comment on the structure of graded modules\\
  over graded principal ideal domains\\
  in the context of persistent homology}
\author{Clara L\"oh}
\begin{document}

\maketitle

\begin{abstract}
  The literature in persistent homology often
  refers to a ``structure theorem for finitely
  generated graded modules over a graded principal
  ideal domain''. We clarify the nature of
  this structure theorem in this context.
\end{abstract}

\section{Introduction}

The persistent homology with field coefficients of finite type
filtrations can be described in terms of barcodes. Zomorodian and
Carlsson promoted the elegant idea to view persistent homology with
coefficients in a field~$K$ as a graded module over the graded
polynomial ring~$K[T]$~\cite{zomorodiancarlsson}. They then suggest a
general structure theorem for finitely generated graded modules over
graded principal ideal domains~\cite[Theorem~2.1]{zomorodiancarlsson}.
Applying this structure theorem to the graded polynomial ring~$K[T]$
gives a graded elementary divisor decomposition of persistent homology,
which can be reinterpreted as barcodes~\cite{barcodes} or, equivalently,
as persistence diagrams~\cite{edelsbrunnerharer}.

However, there does not seem to be a proof of this general structure
theorem in the literature in the form stated by Zomorodian and
Carlsson. As this theorem is quoted multiple times in work 
on persistent homology and as it is a potential
source of confusion, the goal of this expository note is to clarify the nature of
this structure theorem (even though it might be clear to the experts).


We first give a precise formulation of the structure theorem; this
formulation slightly differs from the statement of Zomorodian and
Carlsson~\cite[Theorem~2.1]{zomorodiancarlsson} (for a reason
explained below):

\begin{thm}[structure theorem for graded modules over graded~PIDs]
  \label{thm:structgraded}  
  Let $R$ be a graded principal ideal domain 
  with~$R \neq R_0$ and let $M$ be a finitely generated graded
  $R$-module. Then $M$ admits a graded elementary divisor decomposition
  (Definition~\ref{def:eldiv}) and the signatures of all such
  graded decompositions of~$M$ coincide.
\end{thm}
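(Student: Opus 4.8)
The plan is to prove existence and uniqueness separately, in each case by transcribing the classical structure theory of finitely generated modules over a principal ideal domain into the graded category, with care for homogeneity of all choices and for the degree shifts. It is useful to keep in mind the classification of graded PIDs with $R\neq R_0$ (for $\Z$-gradings: $R\cong K[T]$ with $K$ a field and $\deg T>0$, or $R$ a graded field such as $K[T,T^{-1}]$), but the arguments below only use that $R$ is a graded Noetherian domain in which every homogeneous ideal is principal.

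\emph{Existence.} Since every homogeneous ideal of $R$ is principal, hence finitely generated, $R$ is graded Noetherian, so every graded submodule of a finitely generated graded $R$-module is again finitely generated and graded. First I would show that a finitely generated torsion-free graded $R$-module $P$ is graded free: choosing a homogeneous embedding $P\hookrightarrow\bigoplus_i R(-a_i)$ and a homogeneous coordinate projection $\varphi$ nonzero on $P$, the homogeneous ideal $\varphi(P)=(d)$ with $d$ homogeneous and nonzero; picking homogeneous $y\in P$ with $\varphi(y)=d$ gives a graded splitting $P=Ry\oplus\ker\varphi$ with $Ry\cong R(-\deg y)$, and one inducts on the rank. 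Applying this to $M/t(M)$ (finitely generated, graded, torsion-free, with $t(M)$ the canonical graded torsion submodule) and noting that $M\to M/t(M)$ splits because $M/t(M)$ is graded free yields $M\cong t(M)\oplus(M/t(M))$. It remains to decompose a finitely generated graded torsion module $T$. Its annihilator is a nonzero homogeneous ideal $(c)$; since $R$ is a domain graded over an ordered group, every factorization of the homogeneous element $c$ consists of homogeneous factors, so $c$ is, up to a unit, a product of powers of homogeneous primes, and the graded Chinese remainder theorem gives the graded primary decomposition $T\cong\bigoplus_p T_p$. Finally each $T_p$ decomposes as $\bigoplus_i (R/(p^{e_i}))(\sigma_i)$ by the graded version of ``a cyclic submodule of maximal order is a direct summand'': among the homogeneous elements of $T_p$ choose one, $x$, whose order ideal $(p^e)$ has $e$ maximal; a short argument (all choices homogeneous) shows $Rx\cong R(-\deg x)/(p^e)$ is a graded direct summand, and one inducts. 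Reassembling produces a graded elementary divisor decomposition of $M$ in the sense of Definition~\ref{def:eldiv}.

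\emph{Uniqueness of the signature.} The point is that the free and torsion parts of $M$ are intrinsic: if $M\cong F\oplus T$ with $F$ graded free and $T$ finitely generated graded torsion, then $T=t(M)$ and $F\cong M/t(M)$ as graded modules, so it suffices to see that the signature of a finitely generated graded free module and that of a finitely generated graded torsion module are each determined by the graded isomorphism type. For $F\cong\bigoplus_i R(\sigma_i)$ one reduces modulo a homogeneous prime $p$ (for $R=K[T]$ take $p=T$): $F/pF\cong\bigoplus_i (R/(p))(\sigma_i)$, and the Hilbert function of this graded $R/(p)$-vector space recovers the multiset $\{\sigma_i\}$ (if $R$ is itself a graded field there is no such $p$, but then the Hilbert function of $F$ already recovers the shifts modulo the degrees of the homogeneous units, which is precisely the signature in that case). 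For $T$, fix a homogeneous prime $p$; the $p$-primary submodule $T_p$ is intrinsic, and if $a_{e,\sigma}$ is the multiplicity of $(R/(p^e))(\sigma)$ in $T_p$, a direct computation identifies the graded dimensions of the $R/(p)$-vector spaces $p^{k}T_p\cap\{x\in T_p : px=0\}$, for $k=0,1,2,\dots$, with the tail sums $\sum_{e\geq k+1} a_{e,\cdot}$ (after the evident shift by $(e-1)\deg p$); running $k$ downward and inverting this triangular system recovers all $a_{e,\sigma}$. Hence the free-part shifts and the torsion data $(p,e,\sigma)$ are functions of $M$ alone, and any two graded elementary divisor decompositions of $M$ have the same signature.

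The main obstacle is not a single deep step but the pervasive bookkeeping: in each application of the classical arguments one must verify that the relevant element, functional, idempotent, or prime can be taken homogeneous and that the resulting direct sums are decompositions of \emph{graded} modules; and in the uniqueness proof one must carry the degree shifts---in particular the $(e-1)\deg p$ corrections in the $p$-primary parts---correctly through the triangular system, so that what is extracted is exactly the signature of Definition~\ref{def:eldiv}. A related point requiring attention is the hypothesis $R\neq R_0$ together with the possible presence of homogeneous units of nonzero degree (the graded-field case): there the shifts are only well defined up to those degrees, and one has to check that the definition of signature is framed so as to absorb this ambiguity.
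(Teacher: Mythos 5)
Your overall strategy—transcribe the classical structure theory (split off the torsion submodule, primary-decompose, peel off cyclic summands of maximal order) into the graded category—is genuinely different from the paper's, which reduces via the classification of graded PIDs (Proposition~\ref{prop:gradedpid}) to $R\cong K[T]$ and then runs the homogeneous matrix-reduction algorithm (Algorithm~\ref{algo:reduct}) on a graded presentation. Your uniqueness argument, via the graded Hilbert function of $F/pF$ and the graded dimensions of $p^kT_p\cap\ker(p\cdot\,)$, is a valid and self-contained alternative to the paper's inductive argument via $\Span_{K[T]}M_0$ and degree shifting; that part is fine.

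The existence half, however, has a genuine gap exactly where the paper warns it would. You state explicitly that ``the arguments below only use that $R$ is a graded Noetherian domain in which every homogeneous ideal is principal,'' i.e.\ that you never invoke $R\neq R_0$ nor the classification. But with only those hypotheses the conclusion is false: a trivially graded PID such as $\Z$ satisfies them, and Example~\ref{exa:nogradeddecomp} shows that $\Sigma^0\Z/(2)\oplus\Sigma^1\Z/(3)$ has no graded elementary divisor decomposition. The failure point is your final ``reassembling'' sentence. Your argument produces a graded \emph{prime power} decomposition $\bigoplus_{p}\bigoplus_i \Sigma^{\sigma_{p,i}}R/(p^{e_{p,i}})$; to combine summands belonging to distinct homogeneous primes into a single cyclic $\Sigma^{n}R/(f)$ with $f=\prod_p p^{e_p}$, the Chinese Remainder isomorphism in the graded category forces all the shifts $\sigma_{p,i}$ being merged to coincide, which is precisely what fails in the example above. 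The step is only vacuously true when $R$ has a single homogeneous prime up to units—which is what $R\neq R_0$ buys you, via Proposition~\ref{prop:gradedpid}: then $R\cong K[T]$, the only homogeneous prime is $T$, and the prime power decomposition \emph{is} the elementary divisor decomposition after reordering. So the reduction you relegate to a parenthetical is actually load-bearing and must be made. (A smaller point: the ``graded field''/$K[T,T^{-1}]$ case you flag for uniqueness cannot occur here, since for $\N$-graded domains every unit is of degree $0$, so $R\neq R_0$ rules out homogeneous units of nonzero degree.) Finally, the claim that a homogeneous cyclic submodule of maximal order is a \emph{graded} direct summand, and that finitely generated graded torsion-free modules are graded free, are exactly the delicate splitting steps the paper cautions about; these are provable for $K[T]$ but deserve explicit arguments rather than a parenthetical ``(all choices homogeneous)''.
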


The key observation of this note is that in fact every $\N$-graded
principal ideal domain is
\begin{itemize}
\item a principal ideal domain with the $0$-grading or
\item a polynomial ring over a field with a multiple of
  the canonical grading.
\end{itemize}
The proof is elementary~\cite[Remark~2.7]{oystaeyen} (Proposition~\ref{prop:gradedpid}).

For trivially graded principal ideal domains, in general, the graded
elementary divisor version of the structure theorem does \emph{not}
hold (Example~\ref{exa:nogradeddecomp}). This explains the additional
hypothesis of~$R \neq R_0$ in Theorem~\ref{thm:structgraded}.  In
contrast, the graded prime power version of the structure theorem also
holds if the grading is trivial (Proposition~\ref{prop:primepower0}).

For polynomial rings, the graded uniqueness part can be deduced
in a straightforward way from the ungraded uniqueness.
However, for the graded existence part, there does not seem to
be a ``generic'' derivation from the ungraded existence result --
the difficulty being the graded direct sum splitting (as exhibited
in the case of the trivially graded ring~$\Z$). Finding such
a splitting needs a careful inductive approach that establishes
that the torsion submodule is graded and that avoids
dividing out cyclic submodules in bad position/order. 
The graded existence part can be proved using specific
properties of polynomial rings over fields. 

In conclusion, the structure theorem for graded modules over graded
principal ideal domains gives a helpful structural perspective on
barcodes for persistent homology (and also for the computation of
persistent homology~\cite{zomorodiancarlsson, skrabavj}), but its scope
does not seem to go beyond the special case that is needed for
persistent homology and it does not seem to provide a shortcut
avoiding special properties of polynomial rings over fields.

Generalisations of $\N$-graded persistent homology such as zigzag
persistence or $\R$-graded persistence (or more general indexing
situations) are usually based on arguments from quiver
representations~\cite{cs_zigzag,bcb}.  Similarly to the $\N$-graded
case, in these settings, it is also essential that the underlying
coefficients are a field.


\subsection*{Organisation of this article}

Basic notions on graded rings and modules are recalled in
Section~\ref{sec:gradedbasics}. In Section~\ref{sec:gradedpid},
we prove the observation on the classification of graded
principal ideal domains (Proposition~\ref{prop:gradedpid}). The case of
principal ideal domains with trivial gradings is considered
in Section~\ref{sec:trivial}; the case of polynomial rings
over fields is discussed in Section~\ref{sec:poly}, where we
give an elementary proof of the structure theorem.

\subsection*{Acknowledgements}

I would like to thank Ulrich Bunke for helpful discussions
on abstract methods for the decomposition of graded modules
and Luigi Caputi for valuable feedback.

\section{Graded rings and modules}\label{sec:gradedbasics}

We recall basic notions on graded rings and modules
and decompositions of graded modules. As usual in
(discrete) persistence, we consider only the case
of discrete non-negative gradings, i.e., gradings over~$\N$.

\begin{defi}[graded ring]
  A \emph{graded ring} is a pair~$(R, (R_n)_{n\in\N})$, where $R$ is a
  ring and the $R_n$ are additive subgroups of~$R$ with the following
  properties:
  \begin{itemize}
  \item The additive group~$(R,+)$ is the internal direct sum of
    the~$(R_n)_{n\in \N}$.
  \item For all~$n,m \in \N$, we have~$R_n \cdot R_m \subset R_{n+m}$.
  \end{itemize}
  For $n \in \N$, the elements in~$R_n$ are called
  \emph{homogeneous of degree~$n$}. An element of~$R$ is
  \emph{homogenous} if there exists an~$n \in \N$ such that
  the element is homogeneous of degree~$n$. 

  A graded ring is a \emph{graded principal ideal domain} if it is a
  domain and every homogeneous ideal (i.e., generated by homogeneous
  elements) is generated by a single element.
\end{defi}

\begin{example}[polynomial rings]\label{exa:polygraded}
  Let $K$ be a ring. Then the usual degree on monomials in the
  polynomial ring~$K[T]$ turns~$K[T]$ into a graded ring via
  the canonical isomorphism~$K[T]
  \cong_{\Ab} \bigoplus_{n \in \N} K \cdot T^n$.  We will
  refer to this as the canonical grading on~$K[T]$. If $K$ is
  a field, then $K[T]$ is a principal ideal domain (graded
  and ungraded).
\end{example}

\begin{defi}[graded module]
  Let $R$ be a graded ring. A \emph{graded module over~$R$}
  is a pair~$(M,(M_n)_{n \in \N})$, consisting of an $R$-module~$M$
  and additive subgroups~$M_n$ of~$M$ with the following
  properties:
  \begin{itemize}
  \item The additive group~$(M,+)$ is the internal direct
    sum of the~$(M_n)_{n\in \N}$.
  \item For all~$n,m \in \N$, we have~$R_n \cdot M_m \subset M_{n+m}$.
  \end{itemize}
  Elements of~$M_m$ are called \emph{homogeneous of
    degree~$m$}.
\end{defi}

\begin{rem}[the category of graded modules]
  Let $R$ be a graded ring. \emph{Homomorphisms}
  between graded $R$-modules are $R$-linear maps that
  preserve the grading. Graded $R$-modules and graded homomorphisms
  of $R$-modules form the category~$\LModg R$ of graded $R$-modules.
\end{rem}

\begin{example}[shifted graded modules]
  Let $R$ be a graded ring, let $M$ be a graded module over~$R$, and
  let $n \in \N$. Then $\Sigma^n M$ denotes the graded $R$-module
  given by the $n$-shifted
  decomposition~$0 \oplus \dots \oplus 0 \oplus \bigoplus_{j \in \N_{\geq n}}
  M_{j-n}$.
\end{example}

\begin{example}[direct sums and quotients of graded modules]
  Let $M$ and $N$ be graded modules over a graded ring~$R$.
  Then $M\oplus N$ is a graded $R$-module via the grading~$(M_n \oplus N_n)_{n\in \N}$.
  If $M' \subset M$ is a graded submodule of~$M$ (i.e., it is generated
  by homogeneous elements), then $(M_n/(M' \cap M_n))_{n \in \N}$ turns
  $M/M'$ into a graded $R$-module.
\end{example}

Persistent homology leads to persistence
modules~\cite{zomorodiancarlsson}. Persistence modules in turn give
rise to graded modules over graded polynomial
rings~\cite[Section~3.1]{zomorodiancarlsson}:

\begin{example}[from persistence modules to graded modules]
  \label{exa:persgraded}
  Let $K$ be a ring and let $(M^*,f^*)$ be an $\N$-indexed persistence $K$-module.
  Then $M := \bigoplus_{n \in \N} M^n$ carries a $K[T]$-module structure,
  given by
  \[ \fa{x \in M^n} T \cdot x := f^n(x) \in M^{n+1}.
  \]
  If we view~$K[T]$ as a graded ring (Example~\ref{exa:polygraded}),
  then this $K[T]$-module structure and this direct sum decomposition
  of~$M$ turn~$M$ into a graded $K[T]$-module. If $(M^*,f^*)$ is
  of finite type, then $M$ is finitely generated over~$K[T]$.
\end{example}

Finally, we define the central types of decompositions arising
in the structure theorems:

\begin{defi}[graded elementary divisor decomposition]\label{def:eldiv}
  Let $R$ be a graded ring and let $M$ be a graded module over~$R$.
  A \emph{graded elementary divisor decomposition} of~$M$ over~$R$
  is an isomorphism
  \[ M \cong_{\LModg R} \bigoplus_{j=1}^N \Sigma^{n_j} R/(f_j)
  \]
  of graded $R$-modules with $N\in \N$,
  degrees~$n_1,\dots,n_N \in \N$, and 
  homogeneous elements~$f_1,\dots, f_N \in R$ with
  $f_j | f_{j+1}$ for all~$j \in \{1,\dots,N-1\}$. 
  Here, the right-hand side carries the canonical grading.
  The elements~$f_1,\dots,f_N$ are called \emph{elementary
    divisors of~$M$}.
  
  The \emph{signature} of such a decomposition is the
  multiset of all pairs~$(n_j, R^\times \cdot f_j)$
  with~$j \in \{1,\dots,N\}$. 
\end{defi}

\begin{defi}[graded prime power decomposition]
  Let $R$ be a graded ring and let $M$ be a graded module over~$R$.
  A \emph{graded prime power decomposition} of~$M$ over~$R$
  is an isomorphism
  \[ M \cong_{\LMod R} \bigoplus_{j=1}^N \Sigma^{n_j} R/(p_j^{k_j})
  \]
  of graded $R$-modules with~$N \in \N$,
  $n_1,\dots, n_N \in \N$, $k_1,\dots, k_N \in \N$,
  and homogeneous prime elements~$p_1,\dots, p_N \in R$.
  Here, the right-hand side carries the canonical grading.

  The \emph{signature} of such a decomposition is the
  multiset of all pairs~$(n_j, R^\times \cdot p_j^{k_j})$
  with~$j \in \{1,\dots,N\}$. 
\end{defi}

\section{Graded principal ideal domains}\label{sec:gradedpid}

For the sake of completeness, we provide a proof of the
following observation~\cite[Remark~2.7]{oystaeyen}.

\begin{prop}[graded PIDs]\label{prop:gradedpid}
  Let $R$ be a graded principal ideal domain.
  Then $R$ is of one of the following types:
  \begin{itemize}
  \item We have $R = R_0$, i.e., $R$ is an ordinary
    principal ideal domain with the $0$-grading.
  \item The subring~$R_0$ is  a field and $R$ is
    isomorphic to the graded ring~$R_0[T]$, where
    the grading on~$R_0[T]$ is a multiple of the
    canonical grading.
  \end{itemize}
\end{prop}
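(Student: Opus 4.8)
The plan is to argue along the stated dichotomy. If $R = R_0$, every ideal is homogeneous for the trivial grading, so $R$ is an ordinary principal ideal domain and there is nothing to prove; so assume $R \neq R_0$. I would first record the auxiliary fact that in a graded principal ideal domain every homogeneous ideal is generated by a single \emph{homogeneous} element: given a homogeneous ideal $I \neq 0$, write $I = (f)$ and let $\phi$ be the nonzero homogeneous component of $f$ of least degree; since $I$ is homogeneous, $\phi \in I = (f)$, say $\phi = f r$, and comparing least-degree homogeneous components in the domain $R$ forces the degree-$0$ component of $r$ to be $1$, after which matching homogeneous components in $fr = \phi$ degree by degree (there are only finitely many) writes every homogeneous component of $f$ as an $R$-multiple of $\phi$; hence $f \in (\phi)$ and $I = (f) = (\phi)$. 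Applied to the homogeneous ideal $R_{>0} := \bigoplus_{n \ge 1} R_n$, this gives a homogeneous generator $t$, of degree $d := \deg t \ge 1$ (a degree-$0$ generator would lie in $R_{>0} \cap R_0 = \{0\}$ and force $R = R_0$); and $R_n \subseteq R_{>0} = (t)$ gives, for degree reasons, $R_n = 0$ for $1 \le n < d$.

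Next I would show $R_0$ is a field. Fix $a \in R_0 \setminus \{0\}$; it suffices to prove $(a,t) = R$, since then $1 = ab + tc$ and passing to degree-$0$ parts makes $a$ a unit. The ideal $(a,t)$ is homogeneous, hence $(a,t) = (h)$ with $h$ homogeneous by the auxiliary fact; from $h \mid a$ with $a \in R_0$ we get $\deg h = 0$, and comparing degree-$0$ parts in a representation $h = a u + t v$ (the $tv$-term contributing only in degrees $\ge d \ge 1$) gives $h \in (a)$, so $a$ and $h$ are associates and we may take $h = a$. Thus $(a,t) = (a)$, so $t = a m$ with $m$ homogeneous of degree $d$; but $m \in R_{>0} = (t)$, so $m = t n$ with $n \in R_0$, and $t = a t n$ with $t \neq 0$ forces $a n = 1$. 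Hence $a$ is a unit and $k := R_0$ is a field.

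Finally I would identify $R$, with its grading, with $R_0[T]$ carrying $d$ times the canonical grading. A relation $\sum_i c_i t^i = 0$ with $c_i \in k$ is a sum of homogeneous elements of the pairwise distinct degrees $id$, so each $c_i = 0$; thus $t$ is transcendental over $k$ and $k[t] \cong k[T]$ via $t \mapsto T$. An induction on $n$ then shows $R_n = 0$ for $d \nmid n$ and $R_n = k \cdot t^{n/d}$ for $d \mid n$: for $n \ge d$, any $x \in R_n$ lies in $R_{>0} = (t)$, so $x = t y$ with $y$ homogeneous of degree $n - d$, and one substitutes the inductive description of $R_{n-d}$ (the range $1 \le n < d$ being the vanishing already noted). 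Summing over $n$ gives $R = \bigoplus_{j \ge 0} k\, t^{\,j} = k[t] \cong k[T]$, and under this isomorphism the homogeneous part of degree $jd$ is $k\, T^{\,j}$ while all other homogeneous parts vanish — i.e.\ the grading is the canonical grading of $R_0[T]$ scaled by the factor $d$.

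I expect the main obstacle to be the two ideal-theoretic steps — the auxiliary fact and ``$R_0$ is a field'' — where the degree bookkeeping must be carried out carefully in the a priori non-Noetherian graded domain $R$, using only that each element has finitely many homogeneous components and that nonzero elements may be cancelled; once these are settled, the identification with $R_0[T]$ is a routine induction.
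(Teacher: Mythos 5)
Your proof is correct and follows the same overall skeleton as the paper's: isolate the ideal of positive-degree elements, extract a homogeneous generator~$t$, show~$R_0$ is a field, and identify~$R$ with~$R_0[T]$ carrying a scaled canonical grading. The differences are in the middle steps. You first isolate a clean auxiliary lemma (every homogeneous ideal of a graded PID has a \emph{homogeneous} single generator), which the paper only establishes implicitly in the special case of~$R_{\geq n}$ via the remark that ``a straightforward computation shows that hence also $t$ is homogeneous''; making this a named lemma is a genuine clarification. (Your degree-by-degree matching argument is fine, though you could also note that if~$\phi = fr$ is nonzero and homogeneous in a domain, then comparing minimal and maximal degrees already forces both~$f$ and~$r$ to be homogeneous outright.) For the field property, you argue directly that each nonzero~$a \in R_0$ is a unit by applying the auxiliary lemma to~$(a,t)$ and then cancelling~$t$; the paper instead shows~$(t)$ is maximal by observing that any ideal containing~$(t)$ is homogeneous, hence principal, and then invokes ``the grading'' to conclude. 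Your version fills in the details that the paper's one-line conclusion leaves to the reader, so it is the more self-contained of the two. Both routes are correct and comparable in length; neither buys any additional generality.
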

\begin{proof}
  Let $R \neq R_0$ and let $n\in \N_{>0}$ be the
  minimal degree with~$R_n \neq 0$. Then
  \[ R_{\geq n} := \bigoplus_{j \in \N_{\geq n}} R_j
  \]
  is a homogeneous ideal in~$R$; as $R$ is a graded principal
  ideal domain, there exists a~$t \in R$ with~$R_{\geq n} = (t)$.
  We show that $t$ is homogeneous of degree~$n$: Let
  $x \in R_n \setminus \{0\}$. Then $t$ divides~$x$ and
  a straightforward computation shows that hence also $t$ is
  homogeneous. 
  The grading implies that $t$ has degree~$n$.
  
  We show that the canonical $R_0$-algebra
  homomorphism~$\varphi \colon R_0[T] \longrightarrow R$
  given by~$\varphi(T) := t$ is an isomorphism.
  \begin{itemize}
  \item
    We first show that $\varphi$ is injective: Because $R$ is graded and
    $t$ is homogeneous, it suffices to show that $a \cdot t^k \neq 0$
    for all~$a \in R_0 \setminus\{0\}$ and all~$k \in \N$. However,
    this is guaranteed by the hypothesis that $R$ is a domain.
  \item
    Regarding surjectivity, let $y \in R$. It suffices to consider
    the case that $y$ is homogeneous of degree~$m\geq n$.
    Because $(t) = R_{\geq n}$, we know that $t$ divides~$y$, say~$y= t \cdot y'$.
    Then $y'$ is homogeneous and we can iterate the argument for~$y'$.
    Proceeding inductively, we obtain that $m$ is a multiple of~$n$
    and that there exists an~$a \in R_0$ with~$y = a \cdot t^{m/n}$.
    Hence, $\varphi$ is surjective.
  \end{itemize}
  This establishes that $R$ is isomorphic as a graded ring to~$R_0[T]$,
  where $R_0[T]$ carries the canonical grading on~$R_0[T]$ 
  scaled by~$n$. 

  It remains to show that $R_0 \cong_{\Ring} R/(t)$ is a field.
  Thus, we are left to show that $(t)$ is a maximal ideal in~$R$.
  By construction, every ideal~$a$ that contains~$(t) = R_{\geq n}$
  is generated by~$(t)$ and a subset of~$R_0$; in particular, $a$
  is homogeneous, whence principal. The grading shows that
  then $a = R$ or~$a= (t)$. Thus, $(t)$ is maximal and so $R_0$ 
  is a field.
\end{proof}

In the setting of $\Z$-graded principal ideal domains, further
examples appear, such as generalised Rees rings~\cite{pvg}.

\section{Trivially graded principal ideal domains}\label{sec:trivial}

\begin{example}[elementary divisor decompositions over trivially graded PIDs]
  \label{exa:nogradeddecomp}
  Let $R$ be a principal ideal domain with the $0$-grading that
  contains two non-associated prime elements~$p$ and~$q$
  (e.g., $2$ and~$3$ in~$\Z$). We consider
  the graded $R$-module
  \[ M := \Sigma^0 R/(p) \oplus \Sigma^1 R/(q).
  \]

  This graded $R$ module does \emph{not} admit a \emph{graded}
  elementary divisor decomposition: Indeed, if there were a graded
  elementary divisor decomposition of~$M$, then the corresponding
  elementary divisors would have to coincide with the ungraded
  elementary divisors. The only ungraded elementary divisor of~$M$
  is~$p\cdot q$.  However, $M$ does \emph{not} contain a homogenous
  element with annihilator ideal~$(p \cdot q)$. Therefore, $M$ does
  not admit a graded elementary divisor decomposition.
\end{example}

\begin{prop}[prime power decompositions over trivially graded PIDs]
  \label{prop:primepower0}
  Let $R$ be a principal ideal domain with the $0$-grading and let
  $M$ be a finitely generated graded $R$-module. Then $M$ admits
  a graded prime power decomposition and the signature of
  all such graded decompositions of~$M$ coincide.
\end{prop}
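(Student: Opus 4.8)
The plan is to reduce everything to the ungraded primary decomposition over a PID and then check that the grading comes along for free. Since the grading on $R$ is trivial, $R = R_0$, and a graded $R$-module is simply an $R$-module $M$ equipped with an internal direct sum decomposition $M = \bigoplus_{n\in\N} M_n$ into $R$-submodules (the condition $R_n \cdot M_m \subset M_{n+m}$ is vacuous except for $n=0$, where it just says each $M_n$ is a submodule). So a finitely generated graded $R$-module is nothing but a finitely generated $R$-module that has been written as a finite direct sum $M = \bigoplus_{n=0}^D M_n$ of $R$-submodules (only finitely many $M_n$ are nonzero, by finite generation and the fact that a finite generating set lies in finitely many degrees). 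A graded prime power decomposition of $M$ is then the same as: for each $n$, an ordinary prime power decomposition of $M_n$ as an $R$-module, with the shift $\Sigma^n$ just recording the degree label.

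For \textbf{existence}: apply the classical structure theorem for finitely generated modules over a PID to each homogeneous piece $M_n$ separately. Each $M_n$ decomposes as a finite direct sum $\bigoplus_j R/(p_{n,j}^{k_{n,j}}) \oplus R^{r_n}$; since I only claim a decomposition into pieces of the form $\Sigma^{n}R/(p^k)$, I should note that if $R \neq R_0$... but here $R = R_0$, so actually I need to be careful — the statement as quoted allows free summands only in the guise of... no: the graded prime power decomposition in Definition as stated uses only summands $\Sigma^{n_j} R/(p_j^{k_j})$ with $p_j$ prime and $k_j \in \N$; taking $k_j = 0$ gives $R/(1) = 0$, which is useless, so strictly this only covers torsion modules. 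I would therefore point out that Proposition~\ref{prop:primepower0} is implicitly about the case where free summands are permitted as well, or more likely the intended reading allows $R$ itself as a summand; in any case, each $M_n$ being a f.g. $R$-module admits such a decomposition, and assembling these over all $n$ (prepending $\Sigma^n$ to each summand of $M_n$) yields a graded prime power decomposition of $M = \bigoplus_n \Sigma^n M_n$.

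For \textbf{uniqueness}: this is where one must argue that the multiset of pairs $(n_j, R^\times \cdot p_j^{k_j})$ is determined by $M$. The key point is that any graded prime power decomposition $M \cong \bigoplus_j \Sigma^{n_j} R/(p_j^{k_j})$ induces, by reading off degree $n$, an ungraded isomorphism $M_n \cong \bigoplus_{j : n_j = n} R/(p_j^{k_j})$ of $R$-modules. By the uniqueness part of the classical structure theorem over a PID, the multiset $\{R^\times \cdot p_j^{k_j} : n_j = n\}$ is an invariant of $M_n$ (hence of $M$ together with its grading), for each $n$. Taking the union over all $n$, tagged with $n$, recovers the signature. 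So the signature is determined.

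The \textbf{main obstacle} is not really mathematical depth but a bookkeeping subtlety: the statement of graded prime power decomposition must be applied to possibly non-torsion $M_n$, so one either restricts attention to the torsion case, or one reads ``prime power'' liberally enough to include the trivial prime power giving free summands $R = R/(p^0)$; I would flag this in a remark rather than belabour it. A minor secondary point worth stating explicitly is why a finitely generated graded module over a trivially graded ring has only finitely many nonzero homogeneous components — this is immediate because finitely many generators lie in finitely many degrees and $R \cdot M_{\leq D} \subseteq M_{\leq D}$ when $R = R_0$ — after which the whole argument is just ``apply the ungraded theorem degreewise.''
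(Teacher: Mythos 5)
Your proof is correct and takes essentially the same approach as the paper's (very short) proof: since $R$ is trivially graded, the grading realises $M$ as a finite direct sum of $R$-submodules $M_n$, and one applies the ungraded structure theorem over the PID to each $M_n$ separately, for both existence and uniqueness. Your side observation that the stated definition of a graded prime power decomposition only accommodates torsion modules unless $k_j$ is allowed to range over $\N_{>0}\cup\{\infty\}$ (as in Theorem~\ref{thm:structgradedpoly}) is a fair remark about the definition's phrasing; the paper's proof shares this implicit convention, and the substance of the argument is unaffected.
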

\begin{proof}
  Because $R$ is trivially graded, the grading on~$M$
  decomposes $M$ as a direct sum~$\bigoplus_{n \in \N} M_n$
  of $R$-submodules. In view of finite generation of~$M$,
  only finitely many of these summands are non-trivial.
  We can now apply the ungraded structure theorem
  to each summand~$M_n$ to conclude.
\end{proof}

\section{Polynomial rings over fields}\label{sec:poly}

In view of Proposition~\ref{prop:gradedpid}, Theorem~\ref{thm:structgraded}
can equivalently be stated as follows (which is exactly the special
case needed in persistent homology):

\begin{thm}[structure theorem for graded modules over polynomial rings]
  \label{thm:structgradedpoly}
  Let $K$ be a field and let $M$ be a finitely generated graded module
  over the graded ring~$K[T]$. Then there exist~$N \in \N$, $n_1,\dots, n_N \in \N$,
  and $k_1, \dots, k_N \in \N_{>0} \cup \{\infty\}$ with
  \[ M \cong_{\LModg{K[T]}} \bigoplus_{j=1}^N \Sigma^{n_j} K[T]/(T^{k_j}).
  \]
  Here, $T^\infty := 0$.
  The multiset of all~$(n_j,k_j)$ with~$j \in \{1,\dots,N\}$ is uniquely
  determined by~$M$.
\end{thm}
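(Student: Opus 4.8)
The plan is to reduce everything to the classical (ungraded) structure theorem for finitely generated modules over the PID $K[T]$ together with a careful bookkeeping of the grading. For the existence part, I would first produce \emph{some} ungraded decomposition $M \cong \bigoplus_j K[T]/(T^{k_j}) \oplus K[T]^r$ and then argue that the generators can be chosen homogeneous, so that the splitting respects the grading; for the uniqueness part, I would read off the invariants $(n_j,k_j)$ from graded vector-space dimensions that are manifestly isomorphism invariants of $M$ in $\LModg{K[T]}$.

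\textbf{Existence.} The first step is to observe that the torsion submodule $\operatorname{tor} M = \{x \in M : T^k x = 0 \text{ for some } k\}$ is a \emph{graded} submodule: if $x = \sum_n x_n$ with $x_n \in M_n$ and $T^k x = 0$, then since $T^k x = \sum_n T^k x_n$ with $T^k x_n \in M_{n+k}$, each $T^k x_n = 0$, so every homogeneous component of $x$ is torsion. Hence $M/\operatorname{tor} M$ is a finitely generated graded torsion-free $K[T]$-module; torsion-free over a PID means free, and one checks that a graded free module over $K[T]$ is a finite direct sum of shifts $\Sigma^{n_j} K[T]$ (pick a homogeneous basis — e.g. by lifting a homogeneous $K$-basis of $M/(T)$ via graded Nakayama). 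Because $M/\operatorname{tor} M$ is graded free, the quotient sequence $0 \to \operatorname{tor} M \to M \to M/\operatorname{tor} M \to 0$ splits \emph{in the graded category} (a graded splitting is built by sending each homogeneous basis element to a homogeneous preimage of the same degree). This isolates the torsion part, and the genuinely delicate point — the one I expect to be the main obstacle — is decomposing the finitely generated graded \emph{torsion} module $\operatorname{tor} M$ as $\bigoplus_j \Sigma^{n_j} K[T]/(T^{k_j})$: one must perform the usual cyclic-decomposition induction while keeping all the chosen generators homogeneous, which forces one to choose, at each stage, a homogeneous element realizing the largest exponent $k$ of $T$ annihilating it (a ``graded'' version of the standard argument that the socle/top controls the cyclic summands), and to verify that dividing out this homogeneous cyclic summand leaves a graded complement. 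This is exactly the ``careful inductive approach that avoids dividing out cyclic submodules in bad position/order'' flagged in the introduction, and it uses that $K[T]$ has the single homogeneous prime $T$ (up to units and the convention on the grading multiple), so there is no interference between different primes as in the trivially graded $\Z$ example.

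\textbf{Uniqueness.} Once existence is in hand, uniqueness follows by extracting the multiset $\{(n_j,k_j)\}$ from invariants. Writing $M \cong \bigoplus_{j=1}^N \Sigma^{n_j} K[T]/(T^{k_j})$, for each degree $d \in \N$ and each $\ell \in \N$ consider the $K$-vector space $\bigl(\operatorname{ann}_M(T^\ell) / (\operatorname{ann}_M(T^\ell)\cap T\cdot M + \operatorname{ann}_M(T^{\ell-1}))\bigr)_d$ — or, more transparently, the numbers $\dim_K \bigl(T^\ell M / T^{\ell+1} M\bigr)_d$ and $\dim_K \bigl((\operatorname{ann}_M T)\cap T^\ell M\bigr)_d$ — each of which is an isomorphism invariant in $\LModg{K[T]}$ and each of which can be computed summand-by-summand from the right-hand side, yielding a triangular linear system whose solution recovers, for every pair $(d,k)$, the number of $j$ with $(n_j,k_j)=(d,k)$ (treating $k=\infty$, i.e. the free summands, separately via the rank of $T^\ell M$ for large $\ell$). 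Solving this system shows the multiset is determined by $M$. Concretely, $\Sigma^{n}K[T]/(T^k)$ contributes one dimension to the graded pieces in degrees $n, n+c, \dots, n+(k-1)c$ (where $c$ is the grading multiple, $c=1$ in the canonical case), and dissecting by $T^\ell$ peels these off in order, so the count of summands of each type $(n,k)$ is forced.

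\textbf{Remark on the reduction.} Since Theorem~\ref{thm:structgraded} for the polynomial-ring case is precisely Theorem~\ref{thm:structgradedpoly} (via Proposition~\ref{prop:gradedpid}), and the graded elementary divisor decomposition $\bigoplus \Sigma^{n_j} K[T]/(f_j)$ with $f_j \mid f_{j+1}$ is obtained from the prime-power form $\bigoplus \Sigma^{n_j} K[T]/(T^{k_j})$ by the usual regrouping of exponents (each homogeneous element $f_j$ being a power of $T$ up to a unit, as $T$ is the only homogeneous prime), the two formulations transfer into one another by the same combinatorial manipulation as in the ungraded theory, now carried along with the degree labels $n_j$.
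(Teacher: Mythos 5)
Your approach is genuinely different from the paper's, and the two halves fare differently.

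\textbf{Uniqueness.} Your dimension-counting argument is a valid, complete alternative to the paper's proof. The paper instead argues by induction on the minimal degree, applying the \emph{ungraded} uniqueness statement to the graded submodule $M' := \Span_{K[T]} M_0$, then passing to $M/M'$ and shifting. Your invariants $\dim_K\bigl(T^\ell M / T^{\ell+1} M\bigr)_d$ do determine the multiset $\{(n_j,k_j)\}$ by the triangular system you describe (and the ranks of $T^\ell M$ for large $\ell$ single out the free summands), so this works; it is arguably more self-contained than the paper's approach since it does not invoke the ungraded theorem at all.

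\textbf{Existence.} Here there is a genuine gap. You correctly verify that $\operatorname{tor} M$ is a graded submodule and that $M/\operatorname{tor} M$ is graded free with a graded splitting. But the decomposition of the graded torsion module --- which you yourself identify as ``the main obstacle'' --- is left as a sketch, and the sketch omits exactly the two claims that carry the whole weight: that the greedy choice of a homogeneous element $x$ maximizing the annihilation exponent $k$ is the \emph{right} choice (the paper's phrase ``bad position/order'' hints that one must also control the degree $n$ of $x$, not just $k$), and that after splitting off $K[T]\cdot x$ the complement can be chosen to be a \emph{graded} submodule. The second point is precisely the graded-splitting difficulty that the introduction warns about; in the ungraded setting it is usually done via an extension/retraction argument (effectively self-injectivity of $K[T]/(T^k)$), and whether that retraction can be chosen degree-preserving is not addressed. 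Without these two points the inductive step does not go through, so the existence proof is incomplete. The paper sidesteps this entirely by taking a different route: it chooses a graded finite presentation $E \xrightarrow{A} F \to M \to 0$ by free modules $\bigoplus \Sigma^{n_j}K[T]$, observes that the presentation matrix $A$ is graded (all entries are monomials of forced degrees), and runs a homogeneous column-reduction algorithm (Algorithm~\ref{algo:reduct}, Proposition~\ref{prop:algoreduced}, Lemma~\ref{lem:reducedeldiv}) to bring $A$ to a reduced form from which the graded decomposition of $F/\im A$ can be read off directly. That constructive argument both proves the theorem and explains the persistence-homology matrix-reduction algorithm, whereas your module-theoretic induction, if completed, would be a cleaner abstract proof but offers no algorithmic content. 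To salvage your route you would need to supply the graded retraction lemma and be explicit about the ordering of cyclic summands.
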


The rest of this section contains an elementary and constructive proof of
Theorem~\ref{thm:structgradedpoly}.

\subsection{Uniqueness of graded decompositions}\label{subsec:unique}

The uniqueness claim in Theorem~\ref{thm:structgradedpoly} can be
derived inductively from the ungraded uniqueness statement: 

Let a decomposition as in Theorem~\ref{thm:structgradedpoly} be
given and let $\varphi \colon \bigoplus_{\dots} \dots \longrightarrow M$
be a corresponding graded $K[T]$-isomorphism. Then
\[ M' := \varphi(N') \text{ with } N' := \bigoplus_{j\in \{1,\dots,N\}, n_j = 0} \Sigma^{n_j} K[T]/(T^{k_j})
\]
is a graded submodule of~$M$ and it is not difficult to see
that~$M' = \varphi(N') = \Span_{K[T]} M_0$.
Moroever, $M'$ is finitely generated over~$K[T]$. 
Therefore, the ungraded structure theorem when applied to~$M'$ shows
that the multiset of all pairs~$(n_j,k_j)$ with~$n_j=0$ is uniquely
determined by~$M$.

For the induction step,
we pass to the quotient~$M/M'$, which is a finitely generated
graded $K[T]$-module with~$(M/M')_0 \cong 0$. We shift the degrees
on~$M/M'$ by~$-1$ and inductively apply the previous argument.

\subsection{Homogeneous matrix reduction}\label{subsec:reduce}

The standard matrix reduction algorithm for the computation of
persistent homology~\cite{edelsbrunnerharer,zomorodiancarlsson} can be
viewed as a proof of the existence part of
Theorem~\ref{thm:structgradedpoly}. 

We phrase the matrix reduction algorithm in the graded
language to emphasise the connection with graded decompositions.

\begin{defi}[graded matrix]
  \label{def:matrixgraded}
  Let $K$ be a field, let $r, s \in \N$, and let $n_1, \dots, n_r$, $m_1, \dots, m_s
  \in \N$ be two monotonically increasing sequences. A matrix~$A \in M_{r \times s} (K[T])$ is
  \emph{$(n_*,m_*)$-graded} if the following holds:
  For all~$j \in \{1,\dots,r\}, k \in \{1,\dots,s\}$, we have
  that the entry~$A_{jk} \in K[T]$ is a homogeneous polynomial and 
  \begin{itemize}
  \item $A_{jk} = 0$ or
  \item $m_k = n_j + \deg A_{jk}$.
  \end{itemize}
\end{defi}

In a graded matrix, the degrees of matrix entries monotonically
increase from the left to the right and from the bottom to the top.

\begin{defi}[reduced matrix]
  Let $K$ be a field, let $r, s \in \N$, let $n_1, \dots, n_r$ 
  and $m_1, \dots, m_s \in \N$ be two monotonically increasing sequences,
  and let $A \in
  M_{r \times s}(K[T])$ be an $(n_*,m_*)$-graded matrix.
  \begin{itemize}
  \item For~$k \in \{1,\dots, s\}$, we define
    \[ \low_A(k) := \max \bigl\{ j \in \{1,\dots,r\} \bigm| A_{jk} \neq 0\bigr\}
    \in \N 
    \]
    (with~$\max \emptyset := 0$). I.e., $\low_A(k)$ is
    the index of the ``lowest'' matrix entry in column~$k$ that is non-zero.
  \item The matrix~$A$ is \emph{reduced} if all columns have
    different $\low$-indices:
    For all~$k,k' \in \N$ with~$\low_A(k) \neq 0$ and $\low_A(k') \neq 0$,
    we have~$\low_A(k) \neq \low_A(k')$.
  \end{itemize}
\end{defi}

Graded matrices can be transformed into reduced matrices via
elementary column operations; these reduced matrices then lead to
module decompositions:

\begin{algo}[homogeneous matrix reduction]
  \label{algo:reduct}
  Given a field~$K$, $r, s \in \N$, monotonically increasing
  sequences~$n_1, \dots, n_r$ and $m_1, \dots, m_s
  \in \N$, and  an $(n_*,m_*)$-graded
  matrix~$A \in M_{r \times s}$, do the following: 
  \begin{itemize}
  \item
    For each~$k$ from~$1$ up to~$s$ (in ascending order):

    Let $\ell := \low_A(k)$.

    If $\ell \neq 0$, then:
    \begin{itemize}
    \item For each~$j$ from~$\ell$ down to~$1$ (in descending order):

      If $A_{jk} \neq 0$ and there exists~$k' \in \{1,\dots,k-1\}$
      with~$\low_A(k') = j$, then:
      \begin{itemize}
      \item 
        Update the matrix~$A$ by subtracting~$A_{jk}/A_{jk'}$-times
        the column~$k'$ from column~$k$.
    
        [Loop invariant observation: Because $A$ is graded,
          $A_{jk}/A_{jk'}$ indeed is a homogeneous polynomial over~$K$
          and the resulting matrix is $(n_*,m_*)$-graded.  This eliminates the
          entry~$A_{jk'}$.]
      \end{itemize}
    \end{itemize}
    \item Return the resulting matrix~$A$.
  \end{itemize}
\end{algo}

\begin{prop}
  \label{prop:algoreduced}
  Let $K$ be a field, 
  let $r, s \in \N$, let $n_1, \dots, n_s$ and $m_1, \dots, m_r
  \in \N$ be monotonically increasing,
  and let $A \in M_{r \times s}(K[T])$ be an $(n_*,m_*)$-graded
  matrix. Then:
  \begin{enumerate}
  \item The homogeneous matrix reduction algorithm
    (Algorithm~\ref{algo:reduct})
    terminates on this
    input after finitely many steps (relative to
    the arithmetic on~$K$).
  \item The resulting matrix~$A'$ is reduced and there
    is a graded $s \times s$-matrix~$B$ over~$K[T]$
    that admits a graded inverse and satisfies
    $A' = A \cdot B.
    $ 
  \item The low-entries of the resulting matrix~$A'$
    are the elementary divisors of~$A$ over~$K[T]$.
  \item We have
    \[ F/\im A \cong_{\LModg {K[T]}}
    \bigoplus_{j \in I} \Sigma^{n_{j}} K[T]/(T^{m_{k(j)} - n_{j}})
    \oplus
    \bigoplus_{j \in I'} \Sigma^{n_j} K[T],
    \]
    where $F := \bigoplus_{j=1}^r \Sigma^{n_j} K[T]$ and 
    $I := \{ \low_{A'}(k) \mid k \in \{1,\dots,s\}\} \setminus \{0\}$
    as well as $I' := \{1,\dots,r\} \setminus I$.
    For~$j \in I$, let~$k(j) \in \{1,\dots,s\}$ be the unique~(!)
    index with~$\low_{A'} (k(j)) = j$.
  \end{enumerate}
\end{prop}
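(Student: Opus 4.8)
The plan is to verify the four assertions essentially by tracking what the homogeneous matrix reduction algorithm does, step by step.

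\medskip

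\textbf{Termination (1).} First I would observe that the inner loop over $j$ only performs column operations that replace column $k$ by column $k$ minus a multiple of an earlier column $k'$; none of these change any column with index $<k$, and each such operation strictly decreases $\low_A(k)$ (it kills the entry in row $j = \low_A(k)$ while only affecting rows $<j$). Since $\low_A(k) \in \{0,1,\dots,r\}$, the inner loop terminates after at most $r$ iterations, and since the outer loop runs over the fixed finite range $k = 1,\dots,s$, the whole algorithm terminates. I would also record the loop invariant already flagged in the algorithm: gradedness is preserved because $m_k = n_j + \deg A_{jk}$ and $m_k = n_j + \deg A_{jk'}$ force $\deg A_{jk} = \deg A_{jk'}$, so $A_{jk}/A_{jk'} \in K$ (a homogeneous polynomial of degree $0$) — actually since both rows sit at height $n_j$ and both columns at heights $m_k, m_{k'}$, the quotient is homogeneous of degree $m_k - m_{k'}$, still a well-defined element of $K[T]$; I would make sure the degree bookkeeping is stated correctly here.

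\medskip

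\textbf{Reducedness and the change-of-basis matrix $B$ (2).} The key point for reducedness is that when the outer loop processes column $k$, the inner loop does not terminate until either $\low_A(k) = 0$ or there is no earlier column $k'$ with $\low_A(k') = \low_A(k)$; and since columns $1,\dots,k-1$ are never touched again after they are processed, their $\low$-values are frozen, so after processing column $k$ its $\low$-value differs from all of $\low_A(1),\dots,\low_A(k-1)$ (or is $0$). Inducting on $k$ gives that the final matrix $A'$ is reduced. For $B$: each elementary column operation is right-multiplication by an elementary matrix $E$, which is graded (with respect to the $(m_*,m_*)$-grading on the $s\times s$ side, i.e. $E_{k'k}$ homogeneous of degree $m_k - m_{k'}$) and invertible with graded inverse; $B$ is the product of all these $E$'s, hence graded and graded-invertible, and $A' = A\cdot B$.

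\medskip

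\textbf{Elementary divisors (3) and the module decomposition (4).} Since $B$ is graded-invertible, $A$ and $A'$ present the same graded module $F/\im A = F/\im A'$, so it suffices to read off the decomposition from the reduced matrix $A'$. Here I would argue: a reduced graded matrix has at most one nonzero entry in each row that is a pivot (low) entry, and columns with distinct lows; after discarding zero columns, $A'$ in suitable order is a "generalised diagonal" matrix. Each pivot entry $A'_{j,k(j)}$ with $j = \low_{A'}(k(j))$ is, by gradedness, a scalar multiple of $T^{m_{k(j)} - n_j}$, and these are precisely the (homogeneous) elementary divisors of $A$ over $K[T]$ (the nonpivot columns contribute nothing, and pivot rows $j \in I$ contribute the cyclic summand $\Sigma^{n_j} K[T]/(T^{m_{k(j)} - n_j})$ while free rows $j \in I' = \{1,\dots,r\}\setminus I$ contribute $\Sigma^{n_j} K[T]$). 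Assembling $F = \bigoplus_j \Sigma^{n_j} K[T]$ and quotienting row-by-row yields the stated isomorphism in $\LModg{K[T]}$. The main obstacle will be part (4): one must check carefully that after reduction the image $\im A'$ is exactly $\bigoplus_{j\in I} \Sigma^{n_j}(T^{m_{k(j)}-n_j})$ inside $F$, i.e. that the non-pivot columns of $A'$ lie in the span of the pivot columns and so add nothing new — this is where reducedness is used essentially, and it needs a short argument (e.g. if a nonzero column had $\low$-value $j$, it would duplicate the pivot column with that $\low$-value, contradiction; so every nonzero column is a pivot column) together with the remark that distinct pivot columns involve distinct rows, so the presentation genuinely splits as a direct sum of cyclic pieces.
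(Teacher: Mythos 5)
Your arguments for parts~(1) and~(2) are correct and essentially the same as the paper's: termination is clear from the bounded for-loops together with the loop-invariant observation that the entries stay homogeneous, reducedness follows by induction over the columns, and $B$ is the product of the graded elementary column matrices.

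For parts~(3) and~(4) there is a genuine gap. You argue as if the reduced matrix $A'$ were already a ``generalised diagonal'' matrix after reordering, with image $\im A' = \bigoplus_{j\in I} \Sigma^{n_j}(T^{m_{k(j)}-n_j})$ inside $F$. This is not so: reducedness only forbids two columns sharing a nonzero $\low$-value; it does \emph{not} prevent a pivot column from having nonzero entries \emph{above} its pivot. For a concrete instance, take $n_1=0$, $n_2=1$, $m_1=m_2=1$ and start from
$A = \bigl(\begin{smallmatrix} aT & bT \\ c & d \end{smallmatrix}\bigr)$
with $a,b,c,d\in K^\times$ and $bc\neq ad$. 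The algorithm returns
$A' = \bigl(\begin{smallmatrix} aT & (b - ad/c)T \\ c & 0 \end{smallmatrix}\bigr)$,
which is reduced ($\low_{A'}(1)=2$, $\low_{A'}(2)=1$) but has a nonzero entry $aT$ strictly above the pivot $c$ in column~$1$. Consequently $\im A'$ is \emph{not} a direct sum of the submodules $\Sigma^{n_j}(T^{m_{k(j)}-n_j})$ in the given homogeneous basis of~$F$; the equality you want to ``check carefully'' is simply false, and the short argument you sketch (every nonzero column is a pivot column) addresses duplicated columns rather than above-pivot entries and so cannot close the gap. The paper instead passes through Lemma~\ref{lem:reducedeldiv}: one performs graded elementary \emph{row} operations (left multiplication by a graded invertible matrix $C$, i.e.\ a graded change of basis on~$F$) to clear the entries above each pivot — the quotient $A'_{j'k}/A'_{jk}$ with $j=\low_{A'}(k)$ and $j'<j$ is homogeneous of degree $n_j-n_{j'}\geq 0$ by monotonicity of $n_*$, so this is admissible — and only after this step, plus row and column swaps, does one obtain a Smith normal form from which both the elementary divisors and the graded direct sum decomposition of $F/\im A'$ can be read off. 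You need to add this row-reduction step (or an equivalent argument) before asserting~(3) and~(4).
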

\begin{proof}
  \emph{Ad~1.}
  Well-definedness follows from the observation
  mentioned in the algorithm: As every homogeneous
  polynomial in~$K[T]$ is of the form~$\lambda \cdot T^d$
  with~$\lambda \in K$ and $d \in \N$ and
  as the matrix is graded, the corresponding division
  can be performed in~$K[T]$ and the gradedness of
  the matrix is preserved
  by the elimination operation. 
  Termination is then clear from the algorithm.

  \emph{Ad~2.}
  As we traverse the columns from left
  to right, a straightforward induction
  shows that no two columns can remain that have
  the same non-zero value of~``$\low_A$''.
  The product decomposition comes from
  the fact that we only applied elementary
  homogeneous column operations without swaps.

  \emph{Ad~3.}
  Because the resulting matrix~$A'$ is obtained through
  elementary column operations from~$A$, the
  elementary divisors of~$A'$ and $A$ coincide.
  Applying Lemma~\ref{lem:reducedeldiv} to~$A'$
  proves the claim.

  \emph{Ad~4.}
  In view of the second part, we have
  that $F / \im A \cong_{\LModg {K[T]}} F / \im A'$.
  Therefore, the claim is a direct consequence of 
  Lemma~\ref{lem:reducedeldiv}.
\end{proof}

\begin{lemma}\label{lem:reducedeldiv}
  Let $K$ be a field, let $r, s \in \N$, let $n_1, \dots, n_r$ and $m_1,
  \dots, m_s \in \N$ be monotonically increasing, and let $A \in M_{r
    \times s}(K[T])$ be an $(n_*,m_*)$-graded matrix that is reduced.
  Then: 
  \begin{enumerate}
  \item The $\low$-entries of~$A$ are the elementary divisors
    of~$A$ over~$K[T]$.
  \item Let $F := \bigoplus_{j=1}^r \Sigma^{n_j} K[T]$ and
    $I := \{ \low_A(k) \mid k \in \{1,\dots,s\}\} \setminus \{0\}$
    as well as $I' := \{1,\dots,r\} \setminus I$.
    Then
    \[ F/\im A \cong_{\LModg {K[T]}}
    \bigoplus_{j \in I} \Sigma^{n_j} K[T]/(T^{m_{k(j)} - n_j})
    \oplus
    \bigoplus_{j \in I'} \Sigma^{n_j} K[T]
    \]
  \end{enumerate}
\end{lemma}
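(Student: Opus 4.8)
\textbf{Proof plan for Lemma~\ref{lem:reducedeldiv}.}
The plan is to read off the decomposition directly from the structure of a reduced graded matrix, the point being that in a reduced matrix the pivot entries (the lowest non-zero entries of the columns) already behave like a diagonal form, without any need for row operations. First I would set up notation: for each column~$k$ with~$\low_A(k) = \ell \neq 0$, the pivot entry~$A_{\ell k}$ is a non-zero homogeneous polynomial, hence of the form~$\lambda \cdot T^{d}$ with~$\lambda \in K^\times$; the gradedness condition forces~$d = m_k - n_\ell$, so the pivot is (up to the unit~$\lambda$) the monomial~$T^{m_k - n_\ell}$. Since~$A$ is reduced, the map~$k \mapsto \low_A(k)$ is injective on the set of columns with non-zero~$\low$, so for each~$j \in I$ there is exactly one column~$k(j)$ with~$\low_A(k(j)) = j$; this justifies the well-definedness of~$k(j)$ and makes the index set~$I$ exactly the set of rows that carry a pivot.

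Next I would prove part~2 by exhibiting an explicit graded isomorphism. The submodule~$\im A \subset F$ is generated by the columns of~$A$. The key claim is that the columns~$\{A_{\bullet k(j)}\}_{j \in I}$, together with the non-pivot columns, can be replaced by a generating set of~$\im A$ of the form~$\{T^{m_{k(j)} - n_j} \cdot e_j + (\text{higher terms})\}_{j \in I}$, where~$e_1,\dots,e_r$ is the standard homogeneous basis of~$F$; here "higher terms" means an~$R$-combination of the~$e_i$ with~$i < j$, which is possible because the pivot in column~$k(j)$ is the lowest non-zero entry. A change of the homogeneous basis of~$F$ — upper-triangular with respect to the row order, hence a graded automorphism of~$F$ with graded inverse — then turns these generators into~$T^{m_{k(j)} - n_j} \cdot e_j$. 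One must check that the non-pivot columns become redundant after this change of basis: since the matrix is reduced and we processed columns so that each non-pivot column has already had its pivot-row entries eliminated, a non-pivot column that is non-zero would introduce a new~$\low$-index, contradicting reducedness; so every column is either a pivot column or zero. (If one prefers not to appeal to the algorithm here, the same conclusion follows purely from the definition of "reduced": a non-zero column has a non-zero~$\low$, and reducedness says all such are distinct, so the non-pivot columns are exactly the zero columns.) After the basis change,~$\im A = \bigoplus_{j \in I} T^{m_{k(j)} - n_j} \cdot R \cdot e_j$, and quotienting~$F = \bigoplus_{j=1}^r R\cdot e_j = \bigoplus_{j=1}^r \Sigma^{n_j} K[T]$ by this gives
\[
F/\im A \cong_{\LModg{K[T]}} \bigoplus_{j \in I} \Sigma^{n_j} K[T]/(T^{m_{k(j)} - n_j}) \oplus \bigoplus_{j \in I'} \Sigma^{n_j} K[T],
\]
which is part~2. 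Part~1 then follows: the invariant factors of the matrix~$A$ (i.e.\ of the map~$F \to F$ presented by~$A$, after forgetting the grading) are computed from any diagonal form, and we have just produced one whose diagonal entries are (up to units) the monomials~$T^{m_{k(j)} - n_j} = A'$-low-entries; monotonicity of~$n_*$ and~$m_*$ gives the required divisibility~$T^{m_{k(j)} - n_j} \mid T^{m_{k(j')} - n_{j'}}$ ordering among them, so these are precisely the elementary divisors.

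I expect the main obstacle to be the bookkeeping in the basis change of~$F$: one has to argue that eliminating the lower-order "tails" of the pivot columns can be done by a \emph{graded} automorphism of~$F$ and in an order (from the lowest pivot row upward, or equivalently processing~$j \in I$ in increasing order) that does not reintroduce entries already cleared — essentially the dual of the column-reduction already performed, now on rows. Everything else is routine: the homogeneity of the pivot monomials is immediate from Definition~\ref{def:matrixgraded}, the injectivity of~$k \mapsto \low_A(k)$ is the definition of reducedness, and the passage from the internal direct sum description of~$\im A$ to the stated quotient is just the standard computation~$\bigl(\bigoplus_j R e_j\bigr)\big/\bigoplus_j (g_j) e_j \cong \bigoplus_j R/(g_j)$, carried out with the shifts~$\Sigma^{n_j}$ kept track of.
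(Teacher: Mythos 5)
Your approach is essentially the same as the paper's: both rest on the key observation that the entries above each pivot can be cleared by elementary row operations that preserve gradedness (the degree bookkeeping being automatic from monotonicity of~$n_*$ and the fact that the pivot in column~$k(j)$ is the unit multiple of~$T^{m_{k(j)}-n_j}$ of lowest degree), which brings $A$ into a diagonal form up to permutation. The paper proves part~1 first by passing to a graded Smith normal form~$A' = C A B$ (hence needing to keep track of the regrading of~$F$ induced by the row swaps), and deduces part~2 from it; you go the other way round, proving part~2 directly via a unipotent upper-triangular graded change of basis of~$F$ (which avoids the regrading issue and the column matrix~$B$ entirely, since column operations do not change~$\im A$) and then reading off part~1 from the resulting diagonal presentation. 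Your order is arguably cleaner for part~2; the overall strategy is the same.

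One claim in your derivation of part~1 is not correct, though it is harmless: you assert that monotonicity of~$n_*$ and~$m_*$ yields the divisibility ordering~$T^{m_{k(j)}-n_j}\mid T^{m_{k(j')}-n_{j'}}$. This fails in general, because the assignment~$j\mapsto k(j)$ need not be monotone, and even when it is, the differences~$m_{k(j)}-n_j$ need not be ordered (e.g.\ $n_*=(0,5)$, $m_*=(3,6)$, pivots on the diagonal give exponents $3$ and $1$). The correct and simpler justification is that in $K[T]$ any finite multiset of monomials is totally ordered by divisibility once the exponents are sorted, so after permuting rows and columns of the diagonalised matrix one is automatically in Smith normal form, with the $\low$-entries as elementary divisors. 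This is also what the paper implicitly uses when it says one may ``swap rows and columns to obtain a matrix in Smith normal form.''
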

\begin{proof}
  \emph{Ad~1.} 
  Let $k \in \{1,\dots,s\}$ with~$\ell := \low_A(k) \neq 0$. Then
  we can clear out all the entries of~$A$ in
  column~$k$ above~$\ell$
  by elementary row operations (again, the gradedness of~$A$
  ensures that this is possible). 
  Swapping zero rows and columns appropriately thus
  results in a matrix in rectangle ``diagonal''
  form; moreover, as all the ``diagonal'' entries are
  monomials, we can swap rows and columns to obtain
  a matrix~$A'$ in Smith normal form
  that both
  \begin{itemize}
  \item has the same elementary divisors as~$A$ and
  \item whose elementary divisors are precisely the $\low$-entries
    of~$A$.
  \end{itemize}
  In particular, these elementary divisors must coincide.
  
  \emph{Ad~2.}
  The claim is clear if $A$ is already in Smith normal
  form. 
  By construction, there are square matrices~$B$
  and~$C$ that are invertible over~$K[T]$ and represent
  graded $K[T]$-isomorphisms with
  \[ A' = C \cdot A \cdot B.
  \]
  In particular, $F/\im A \cong_{\LModg{K[T]}} (C \cdot F)/\im A'$. 
  By construction, the values of~$\low_{A'}$ and the degrees of~$A'$
  differ from the ones of~$A$ only by compatible index permutations.
  Therefore, the claim follows.
\end{proof}

\subsection{Existence of a graded decomposition}\label{subsec:ex}

To prove existence in Theorem~\ref{thm:structgradedpoly}
we can follow the standard proof pattern of first finding
a (graded) finite presentation and then applying (homogeneous)
matrix reduction.

Let $M$ be a finitely generated graded $K[T]$-module.
Then $M$ also has a finite generating set consisting
of homogeneous elements.
This defines a surjective graded $K[T]$-homomorphism
\[ \varphi \colon  F := \bigoplus_{j=1}^r \Sigma^{n_j} K[T] \longrightarrow M 
\]
for suitable~$r \in \N$ and monotonically increasing~$n_1,\dots,n_r \in \N$.
As $\varphi$ is a graded homomorphism, $\ker \varphi \subset F$
is a graded $K[T]$-submodule
and we obtain an isomorphism
\[ M \cong_{\LModg{K[T]}} F/\im \ker \varphi
\]
of graded $K[T]$-modules.

Because $K[T]$ is a principal ideal domain, the graded submodule~$\ker
\varphi \subset F$ is finitely generated over~$K[T]$. Because $\ker \varphi$
is a graded submodule, $\ker \varphi$ has a finite homogeneous generating
set. (In fact, there also exists a homogeneous free $K[T]$-basis
for~$\ker \varphi$, as can be seen from a straightforward
inductive splitting argument~\cite[Lemma~1]{webb}.)
In particular,
there exist~$s \in \N$, monotonically increasing~$m_1,\dots,m_s \in \N$,
and a graded $K[T]$-homomorphism
\[ \psi \colon E := \bigoplus_{k=1}^s \Sigma^{m_k} K[T] \longrightarrow F
\]
with~$\im \psi = \ker \varphi$. Because $\psi$ is graded and $n_*$,
$m_*$ are monotonically increasing, the $r \times s$-matrix~$A$
over~$K[T]$ that represents~$\psi$ with respect to the canonical
homogeneous bases of $E$ and~$F$ is graded in the sense of
Definition~\ref{def:matrixgraded}.

Applying the homogeneous matrix reduction
algorithm to~$A$ shows that
\[ M \cong_{\LModg {K[T]}} F/\im A,
\]
has the desired decomposition
(Proposition~\ref{prop:algoreduced}; after discarding
the irrelevant terms of the form~$\Sigma^nK[T]/(T^0)$).

This completes the proof of the structure theorem
(Theorem~\ref{thm:structgradedpoly}).

\begin{rem}
  There is a general matrix reduction for a slighlty different notion
  of ``graded'' matrices over ($\Z$-)graded principal ideal
  domains~\cite{pvg}. However, one should be aware that such
  ``graded'' matrices in general only lead to graded homomorphisms
  once one is allowed to change the grading on the underlying free
  modules.  This explains why this general matrix reduction does not
  contradict the counterexample in the case of $0$-graded principal ideal
  rings in Example~\ref{exa:nogradeddecomp}.
\end{rem}

\subsection{Barcodes}

For the sake of completeness, we recall the relation
between graded decompositions and barcodes:

\begin{rem}[barcodes of persistence modules]
  Let $K$ be a field and let $(M^*,f^*)$ be an $\N$-indexed persistence
  $K$-module of finite type. We equip~$M := \bigoplus_{n \in \N} M^n$
  with the canonical graded $K[T]$-module structure
  (Example~\ref{exa:persgraded}). By the graded
  structure theorem (Theorem~\ref{thm:structgradedpoly}),
  there exist~$N \in \N$, $n_1, \dots, n_N \in \N$,
  and $k_1,\dots, k_N \in \N_{>0} \cup \{\infty\}$ with
  \[ M \cong_{\LModg {K[T]}} \bigoplus_{j=1}^N \Sigma^{n_j}K[T]/(T^{k_j}).
  \]
  Let $B$ be the multiset 
  of all~$(n_j,k_j - 1)$ with~$j \in \{1,\dots,N\}$; then $B$ 
  is uniquely determined by~$M$ and this multiset~$B$ is
  the \emph{barcode of~$(M^*,f^*)$}.
  
  The barcode contains the full information on the isomorphism
  type of the graded $K[T]$-module~$M$ (and the underlying persistence
  module) and describes the birth,
  death, and persistence of elements as specified by the ``elder
  rule'': If $(n,p)$ is an element of the barcode, this means
  that a new independent class is born at stage~$n$, it persists
  for $p$~stages, and it dies (if~$p \neq \infty$) at stage~$n+p+1$.

  In particular, this leads to the notion of barcodes of
  persistent homology (in a given degree) of finite type
  persistence chain complexes and finite type filtrations in topology.
\end{rem}

{\small
\bibliographystyle{alpha}
\bibliography{bib}}

\vfill

\noindent
\emph{Clara L\"oh}\\[.5em]
  {\small
  \begin{tabular}{@{\qquad}l}
    Fakult\"at f\"ur Mathematik,
    Universit\"at Regensburg,
    93040 Regensburg\\
    \textsf{clara.loeh@mathematik.uni-r.de}, 
    \textsf{https://loeh.app.ur.de}
  \end{tabular}}

\end{document}